\def\draft{n}
\def\printname#1{
        \if\draft y
                \smash{\makebox[0pt]{\hspace{-0.5in}
                        \raisebox{8pt}{\tt\tiny #1}}}
        \fi
}
\def\lbl#1{\label{#1}\printname{#1}}
                        \theoremstyle{plain}
\newtheorem{theorem}{Theorem}[section]
\newtheorem{lemma}[theorem]{Lemma}
\newtheorem{corollary}[theorem]{Corollary}
\newtheorem{definition}{Definition}
\theoremstyle{definition}
\newtheorem{remark}[theorem]{Remark}
      \def\nc{\newcommand}
   \nc\FI[2]{\begin{figure}
    \begin{center}\input{#1.pstex_t}\end{center}
    \caption{#2}
    \lbl{#1}
  \end{figure}}
\nc\FIG[3]{\begin{figure}
    \includegraphics[#3]{#1.eps}
    \caption{#2}
    \lbl{fig:#1}
    \end{figure}}
\nc\FF[3]{\begin{figure}
    \includegraphics[#3]{#1.eps}
    \caption{#2}
    \lbl{#1}
    \end{figure}}
    \nc\FIGc[3]{\begin{figure}[htpb]
    \includegraphics[height=#3]{#1.eps}
    \caption{#2}
    \lbl{fig:#1}
    \end{figure}}
    \nc\FIGh[3]{\begin{figure}[htpb]
    \includegraphics[height=#3]{#1.eps}
    \caption{#2}
    \lbl{fig:#1}
    \end{figure}}
\def\be { \begin{equation} }
\def\ee { \end{equation} }
\begin{document}

\title{On quotients of congruence subgroups of braid groups}

\author[Jessica Appel]{Jessica Appel}
\address{Department of Mathematics, Indiana University, Bloomington, Indiana 47305, USA}
\email{jlappel@iu.edu}

\author[Wade Bloomquist]{Wade Bloomquist}
\address{School of Mathematics,
 Georgia Institute of Technology, Atlanta, Georgia 30332, USA}
\email{wbloomquist3@gatech.edu}

\author[Katie Gravel]{Katie Gravel}
\address{Department of Mathematics, Massachusetts Institute of Technology, Boston, Massachusetts, 02139, USA}
\email{kgravel@mit.edu}

\author[Annie Holden]{Annie Holden}
\address{Department of Mathematics, University of Notre Dame, Notre Dame, Indiana, 46556, USA}
\email{aholden2@nd.edu}

\thanks{
2020 {\em Mathematics Classification:} Primary 20F36. Secondary 20H05.\\
{\em Key words and phrases: Braid group, Congruence subgroup.}}

\begin{abstract}
The integral Burau representation provides a map from the braid group into a group of integral matrices.  This allows for a definition of congruence subgroups of the braid group as the preimage of the usual principal congruence subgroups of integral matrices.  We explore the structure these congruence subgroups by examining some of the quotients that may arise in the series induced by divisibility of levels.  We build on the work of Stylianakis on symmetric quotients of congruence subgroups, which itself generalizes the quotient of the braid group by the pure braid group.  We accomplish this by utilizing results of Newman on integral matrices and explicitly finding elements in the preimage of any transposition.  Our generalization is made possible by avoiding the use of a generating set for congruence subgroups.  We find further generalizations based on results of Brendle and Margalit as well as Kordek and Margalit on the level four congruence subgroup.  This gives families of quotients which are not isomorphic to symmetric groups.  

\end{abstract}

\maketitle

\section{Introduction}

In this brief note, we use elementary techniques to explore how known results on factor groups in the series of congruence subgroups of braid groups can be generalized.  For each non-negative integer $\ell$, the level $\ell$ congruence subgroup of the braid group, $B_n[\ell]$, is the kernel of the composition of the integral Burau representation with the $\bmod$  $\ell$ reduction map. The integral Burau representation has found uses in many branches of mathematics. For the sake of brevity, we refer the reader to the introduction of \cite{BM} where a list of citations covering applications in algebraic geometry, dynamics, number theory, and topology can be found. Additional details on the integral Burau representation can be found in Section $3$ of Margalit's survey of open problems and questions concerning mapping class groups \cite{Mar}. Our perspective will be based on viewing the integral Burau representation of the braid group as an analog of the symplectic representation of the mapping class group of a surface.  This viewpoint was noted in Section $2.1$ of \cite{BM} for the unreduced Burau representation and is stated here in Lemma \ref{SympConj} for the reduced Burau representation, following the work of \cite{GG}.

Historically, the study of quotients of congruence subgroups of braid groups began with Arnol`d \cite{A}. He showed that the level $2$ congruence subgroup of the braid group may be identified with the pure braid group. Alternatively stated as:
\begin{theorem}[Arnol`d, 1968]
For each $n$,
\[B_n/B_n[2]\cong S_n.\]
\end{theorem}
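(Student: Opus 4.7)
The plan is to identify $B_n[2]$ with the pure braid group $P_n$, after which the statement follows from the classical short exact sequence
\[ 1 \longrightarrow P_n \longrightarrow B_n \xrightarrow{\pi} S_n \longrightarrow 1, \]
where $\pi$ sends each Artin generator $\sigma_i$ to the transposition $(i,i+1)$.

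The first step is to compute the integral Burau matrix of $\sigma_i$ modulo $2$. Since the integral Burau representation arises by specializing the Burau representation at $t=-1$, and $-1 \equiv 1 \pmod 2$, the mod $2$ reduction coincides with the $t=1$ specialization reduced mod $2$. A direct calculation shows that this matrix is (or is conjugate to) the permutation matrix of the transposition $(i,i+1)$: in the unreduced picture it literally is that permutation matrix, and in the reduced picture it is the corresponding image in the standard mod $2$ representation of $S_n$. In either case the composite $B_n \to GL(\mathbb{Z}/2)$ factors through $\pi$, which immediately gives the inclusion $P_n = \ker\pi \subseteq B_n[2]$.

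For the reverse containment $B_n[2] \subseteq P_n$, I would argue that the induced map $\rho: S_n \to GL(\mathbb{Z}/2)$ is injective. In the unreduced case this is the faithful permutation representation, so the conclusion is clear; in the reduced case one performs a short direct check that no nontrivial element of $S_n$ acts as the identity on the standard representation modulo $2$. Combining the two inclusions yields $B_n[2] = P_n$, from which the theorem is immediate.

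The main potential obstacle is precisely the faithfulness of $\rho$ modulo $2$: a priori the reduction could collapse parts of $S_n$, so one has to pin down the explicit matrices and verify that no nontrivial permutation acts trivially. Apart from this verification, every step reduces to a finite, routine computation with the Burau matrices of the Artin generators $\sigma_1,\dots,\sigma_{n-1}$.
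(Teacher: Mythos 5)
Your argument is correct in outline, but note that the paper does not prove this statement at all: it is quoted as a known theorem of Arnol'd and later invoked in the proof of Theorem \ref{symquot} only through the identity $\ker(\tau)=PB_n=B_n[2]$. So you are supplying a proof where the paper supplies a citation, and the proof you supply is the standard one: reduce the integral Burau matrices mod $2$, observe that $t=-1$ becomes $t=1$, so the representation factors through the permutation representation of $S_n$, and then check faithfulness of the induced map on $S_n$. Two points deserve care. First, you correctly isolate the crux as faithfulness of the reduced representation over $\mathbb{Z}/2$; this does hold, but only for $n\geq 3$. The clean way to see it is that an element of $S_n$ acting trivially on $\mathbb{F}_2^n/\langle(1,\dots,1)\rangle$ (or on the sum-zero submodule) must send each $e_i$ to $e_i$ plus a multiple of $(1,\dots,1)$, which is impossible for a nonidentity permutation once $n\geq 3$ since $e_{g(i)}-e_i$ has at most two nonzero entries. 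For $n=2$ the reduced integral Burau is the $1\times 1$ matrix $(-t)$, which is $(1)$ at $t=-1$, so $B_2[2]=B_2$ under the reduced definition and the statement degenerates; the paper's formula for $\rho_{-1}$ tacitly assumes $n\geq 3$, so this is an edge case rather than an error, but your ``short direct check'' should acknowledge it. Second, your passage between the unreduced and reduced pictures is exactly the content of the paper's Remark following the definition, and integrally the unreduced Burau is a non-split extension rather than a literal direct sum, so if you want to run the argument in the unreduced picture and transfer it, you need the (true, but not completely free) fact that the two definitions of $B_n[\ell]$ agree; working directly with the reduced matrices as you also propose avoids this.
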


As a corollary to constructing generators of congruence subgroups of prime level Stylianakis, \cite{Sty1}, proved the following extension:

\begin{theorem}[Stylianakis, 2016]
For each $n$ and each odd prime $p$,
\[B_n[p]/B_n[2p]\cong S_n.\]
\end{theorem}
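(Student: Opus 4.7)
The plan is to restrict the projection $\pi\colon B_n \to B_n/B_n[2] \cong S_n$ (Arnol`d's theorem) to the subgroup $B_n[p]$, and identify the induced map $B_n[p]/B_n[2p] \to S_n$ as an isomorphism.

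First I would pin down the kernel by showing that $B_n[p]\cap B_n[2]=B_n[2p]$. Since each $B_n[\ell]$ is the preimage under the integral Burau representation of the principal level-$\ell$ congruence subgroup of integral matrices, this reduces to the entry-wise statement that an integer $a$ satisfies $a\equiv 0 \pmod{2p}$ if and only if $a\equiv 0\pmod{2}$ and $a\equiv 0\pmod{p}$, i.e.\ the Chinese Remainder Theorem, valid because $\gcd(2,p)=1$. This (together with related bookkeeping about principal congruence subgroups of integer matrices) is the Newman-style input the paper advertises. Restricting $\pi$ to $B_n[p]$ then yields an injection $B_n[p]/B_n[2p]\hookrightarrow S_n$.

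The heart of the argument is surjectivity, which amounts to producing, for each adjacent transposition $(i,i+1)\in S_n$, an explicit preimage in $B_n[p]$. For this I would examine the standard generator $\sigma_i$, which projects to $(i,i+1)$. Under the integral Burau representation, $\sigma_i$ acts as the identity outside a $2\times 2$ block, and specializing the Burau parameter to $-1$ makes this block have characteristic polynomial $(\lambda-1)^2$, so the image of $\sigma_i$ is unipotent. Writing that image as $I+N$ with $N^2=0$ gives $\sigma_i^p\mapsto I+pN$, which is the identity modulo $p$. Hence $\sigma_i^p\in B_n[p]$, and because $p$ is odd, $\sigma_i^p$ still projects to $(i,i+1)^p=(i,i+1)$ in $S_n$. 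Since the adjacent transpositions generate $S_n$, the induced map is surjective.

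The main obstacle is this surjectivity step: producing explicit preimages of transpositions inside $B_n[p]$ without appealing to an explicit generating set for $B_n[p]$ itself (the route taken by Stylianakis). The unipotence of $\sigma_i$ modulo $p$ is what makes the choice $\sigma_i^p$ succeed, and taking an odd power guarantees that the image in $S_n$ remains a transposition. This flexibility, independent of generators for $B_n[p]$, is precisely what will let the paper extend the same template to further quotients at composite levels treated later in the text.
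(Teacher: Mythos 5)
Your proposal is correct and follows the same template as the paper's proof of its Theorem \ref{symquot} (of which this statement is the prime case): restrict the permutation homomorphism $\tau\colon B_n\to S_n$ to $B_n[p]$, obtain surjectivity from the elements $\sigma_i^p$ (which lie in $B_n[p]$ because the Burau matrices at $t=-1$ are unipotent with nilpotent part squaring to zero, and which still map to $(i,i+1)$ because $p$ is odd), and identify the kernel as $B_n[2]\cap B_n[p]=B_n[2p]$. The one place you genuinely diverge is the justification of that last equality: the paper conjugates the integral Burau representation into a symplectic group (Lemma \ref{SympConj}) and quotes Newman's theorem that $\mathrm{Sp}(2n,\mathbb{Z})[a]\cap \mathrm{Sp}(2n,\mathbb{Z})[b]=\mathrm{Sp}(2n,\mathbb{Z})[\mathrm{lcm}(a,b)]$, whereas you argue entrywise: since $B_n[\ell]$ is by definition the set of braids whose Burau matrix is congruent to the identity mod $\ell$, an integer divisible by both $2$ and $p$ is divisible by $2p$. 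Your version is more elementary and self-contained, needs no symplectic input, and applies verbatim to any pair of coprime levels, which is all that is used here and in the paper's later theorems; the citation of Newman buys nothing additional for this particular fact. One cosmetic slip: for the \emph{reduced} Burau representation the interior generators $\sigma_i$ ($1<i<n-1$) differ from the identity in a $3\times 3$ block rather than a $2\times 2$ block, but the nilpotent part still satisfies $N^2=0$, so your computation $\sigma_i^p\mapsto I+pN\equiv I\pmod p$ goes through unchanged.
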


Without constructing a generating set, we provide an additional extension of this result using results on integral matrices due to Newman \cite{N} and providing an explicit surjection from $B_n[\ell]$ onto $S_n$, namely:

\begin{theorem}[Theorem \ref{symquot}]
For each $n$ and each odd $\ell$,
\[B_n[\ell]/B_n[2\ell]\cong S_n.\]
\end{theorem}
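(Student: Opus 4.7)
The plan is to build an explicit surjection $B_n[\ell] \twoheadrightarrow S_n$ with kernel equal to $B_n[2\ell]$. The natural candidate is the restriction to $B_n[\ell]$ of the composition $B_n \twoheadrightarrow B_n/B_n[2] \cong S_n$ furnished by Arnol`d's theorem. If this restricted map has kernel $B_n[2\ell]$ and is surjective, then the theorem follows from the first isomorphism theorem.

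First I would identify the kernel of this restricted map as $B_n[\ell] \cap B_n[2]$, and then show this equals $B_n[2\ell]$. Since $\ell$ is odd, $\gcd(\ell,2) = 1$, so the Chinese Remainder Theorem applied entrywise to integer matrices says $A \equiv I \pmod{2\ell}$ if and only if both $A \equiv I \pmod{\ell}$ and $A \equiv I \pmod{2}$. Pulling this back through the integral Burau representation gives $B_n[2\ell] = B_n[\ell] \cap B_n[2]$ directly from the definition of the congruence subgroups, so the restricted quotient already induces an injection $B_n[\ell]/B_n[2\ell] \hookrightarrow S_n$.

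For surjectivity, since the simple transpositions $(i,i+1)$ generate $S_n$, it suffices to exhibit for each $i$ an element of $B_n[\ell]$ projecting to $(i,i+1)$. I would take $\sigma_i^\ell$: because $\ell$ is odd, its image in $S_n$ equals $(i,i+1)^\ell = (i,i+1)$. To check $\sigma_i^\ell \in B_n[\ell]$, write the reduced integral Burau matrix in the form $\beta(\sigma_i) = I + E_i$; the explicit description of the reduced Burau representation makes it immediate by a small block computation that $E_i^2 = 0$. The binomial theorem then gives $\beta(\sigma_i)^\ell = I + \ell E_i \equiv I \pmod{\ell}$, confirming $\sigma_i^\ell \in B_n[\ell]$ and completing the surjection.

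The main obstacle is really this last step, the explicit production of preimages for the transpositions; once the nilpotent identity $E_i^2 = 0$ is in hand, everything else is routine bookkeeping through the isomorphism theorems. The authors' invocation of Newman's theorems on integer matrices presumably supplies a more systematic framework for this matrix-level verification, and in particular for handling composite levels and the generalizations they announce later. The essential advantage over Stylianakis' prime-level argument is that avoiding a generating set for $B_n[\ell]$ lets the proof run uniformly for every odd $\ell$.
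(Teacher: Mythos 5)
Your proposal is correct and its skeleton is the same as the paper's: restrict the standard projection $\tau\colon B_n\to S_n$ (whose kernel is $PB_n=B_n[2]$ by Arnol`d) to $B_n[\ell]$, hit every transposition with $\sigma_i^{\ell}$ since $\ell$ is odd, identify the kernel of the restriction with $B_n[2]\cap B_n[\ell]=B_n[2\ell]$, and finish with the first isomorphism theorem. Where you genuinely diverge is in how the two supporting facts are justified, and in both cases your route is more elementary. For the identity $B_n[2]\cap B_n[\ell]=B_n[2\ell]$ (the paper's Lemma \ref{LemSplit}), the paper passes through the conjugation to the symplectic group (Lemma \ref{SympConj}) and invokes Newman's Theorem $VII.22$ on principal congruence subgroups of $\mathrm{Sp}(2n,\mathbb{Z})$; you instead observe that, with $B_n[m]$ defined as the kernel of $r_m\circ\rho_{-1}$, the claim reduces to the entrywise statement that an integer is divisible by $2\ell$ iff it is divisible by $2$ and by $\ell$, which needs no structure theory at all. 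For $\sigma_i^{\ell}\in B_n[\ell]$ (the paper's Lemma \ref{power}), you supply the explicit computation $\rho_{-1}(\sigma_i)=I+E_i$ with $E_i^2=0$, so $\rho_{-1}(\sigma_i)^{\ell}=I+\ell E_i\equiv I\pmod{\ell}$; the paper cites either a direct matrix computation or the Dehn-twist-power relation via the symplectic picture. The check $E_i^2=0$ is correct for all three block shapes in the definition. What your version buys is a completely self-contained proof of Theorem \ref{symquot} that never leaves $GL(n-1,\mathbb{Z})$; what the paper's version buys is a uniform framework (Lemma \ref{LemSplit} for arbitrary prime-power factorizations, grounded in the symplectic interpretation) that it reuses verbatim in Theorems \ref{abquot} and \ref{fivelem}, where the levels $4\ell$ also need to be split.
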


The first generalization of the above results to even levels, can be found in Brendle and Margalit's study of the level $4$ braid group \cite{BM}.  They found the following:
\begin{theorem}[Brendle and Margalit, 2018]
For each $n$,
\[B_n[2]/B_n[4]\cong (\mathbb{Z}/2\mathbb{Z})^{\binom{n}{2}}.\]
\end{theorem}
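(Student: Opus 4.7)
The plan is to identify the quotient as an $\mathbb{F}_2$-vector space by exploiting the shape of mod-$4$ reductions of $B_n[2]$-matrices, and then to pin down its dimension by matching the generators of the pure braid group with linearly independent Burau images. By definition, $B_n[2]/B_n[4]$ is the image of $B_n[2]$ in $GL_{n-1}(\mathbb{Z}/4)$ under the reduction mod $4$ of the reduced integral Burau representation. Every element of this image, being congruent to the identity mod $2$, has the form $I + 2A$ with $A \in M_{n-1}(\mathbb{F}_2)$, and since
\[
(I + 2A)(I + 2B) = I + 2(A+B) + 4AB \equiv I + 2(A+B) \pmod{4},
\]
the assignment $I + 2A \mapsto A$ is a group homomorphism into the additive group $(M_{n-1}(\mathbb{F}_2), +)$. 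Hence $B_n[2]/B_n[4]$ embeds into an elementary abelian $2$-group, so it is itself an $\mathbb{F}_2$-vector space.

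For the upper bound on its dimension, I would use that $B_n[2] = P_n$ is generated by the $\binom{n}{2}$ standard pure braid generators $A_{ij}$ for $1 \le i < j \le n$; since the quotient is abelian of exponent $2$, it is spanned by the images of these generators, which gives $\dim_{\mathbb{F}_2} B_n[2]/B_n[4] \le \binom{n}{2}$.

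For the matching lower bound, the plan is to compute the reduced integral Burau matrix $\beta(A_{ij})$ modulo $4$, extract the associated element $\tfrac{1}{2}(\beta(A_{ij}) - I) \in M_{n-1}(\mathbb{F}_2)$, and show that these $\binom{n}{2}$ matrices are linearly independent over $\mathbb{F}_2$. I expect this last step to be the main obstacle: a direct attack by writing down every $\beta(A_{ij})$ is possible but quickly becomes unwieldy as $|i - j|$ grows. A cleaner route exploits the viewpoint of Lemma \ref{SympConj}, which presents the reduced Burau representation as an analog of the symplectic representation of a mapping class group; under this dictionary the $A_{ij}$ correspond to Dehn twists about specific simple closed curves on a punctured disk, their mod-$4$ Burau contributions can be read off from intersection data with a fixed basis of arcs, and linear independence then reduces to the standard fact that the associated intersection vectors are $\mathbb{F}_2$-independent.
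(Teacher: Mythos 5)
Your reduction of the statement to a linear--algebra problem is correct and efficient: $B_n[2]/B_n[4]$ is the image of $B_n[2]$ in $GL_{n-1}(\mathbb{Z}/4)$, the map $I+2A\mapsto A$ identifies that image with a subgroup of $(M_{n-1}(\mathbb{F}_2),+)$, and since $B_n[2]=PB_n$ is generated by the $\binom{n}{2}$ elements $A_{i,j}$, the quotient is elementary abelian of dimension at most $\binom{n}{2}$. The theorem is therefore \emph{equivalent} to the linear independence of the matrices $v_{i,j}=\tfrac12\bigl(\rho_{-1}(A_{i,j})-I\bigr)\bmod 2$, and this is exactly where the proposal breaks down --- not merely because the verification is deferred, but because for the reduced integral Burau representation (the one defined in this paper and used in your argument) the independence is \emph{false} whenever $n$ is even. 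The full twist $\Delta^2=(\sigma_1\cdots\sigma_{n-1})^n$ is a product of all $\binom{n}{2}$ generators $A_{i,j}$, each occurring exactly once, so its image in any abelian quotient is $\sum_{i<j}v_{i,j}$; on the other hand $\rho_{-1}(\Delta^2)=(-1)^nI$, which is the identity for even $n$. Hence $\sum_{i<j}v_{i,j}=0$ is a nontrivial dependence, and your method can only yield $\dim\le\binom{n}{2}-1$. (For $n=4$ one can check by hand that the six matrices $v_{i,j}\in M_3(\mathbb{F}_2)$ sum to zero, so $B_4[2]/B_4[4]$ as defined via reduced Burau has order at most $2^5$, not $2^6$.)

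The underlying issue is the choice of representation. Brendle and Margalit prove the theorem for congruence subgroups defined by the \emph{unreduced} $n\times n$ Burau representation at $t=-1$. The unreduced representation is an extension of the reduced one by a one-dimensional trivial representation, and that extension splits integrally only when $1+t+\cdots+t^{n-1}$ is a unit at $t=-1$, i.e.\ only for odd $n$; for even $n$ the two notions of $B_n[\ell]$ genuinely differ (already for $n=2$, where $\sigma_1^2$ is killed by the reduced representation mod $4$ but not by the unreduced one), and the extra coordinate in the unreduced representation is precisely what detects $\Delta^2$ and restores the missing dimension. So to salvage the proof you must work with the unreduced matrices, and then actually establish the independence of the resulting $\binom{n}{2}$ elements of $M_n(\mathbb{F}_2)$ --- this is the real content of the theorem, not a routine verification. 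The ``standard fact'' you invoke is also not one: the homology classes of the $\binom{n}{2}$ relevant curves lie in a space of dimension roughly $n$, so the classes (or intersection vectors) themselves are badly dependent; what must be shown independent are the associated rank-one forms $c\otimes c$, and that step requires a genuine argument rather than a citation to folklore.
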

Again using results on integral matrices and providing the surjection explicitly, we extend as follows:
\begin{theorem}[Theorem \ref{abquot}]
For each $n$ and each odd $\ell$,
\[B_n[2\ell]/B_n[4\ell]\cong (\mathbb{Z}/2\mathbb{Z})^{\binom{n}{2}}.\]
\end{theorem}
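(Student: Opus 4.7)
Following the strategy of Theorem~\ref{symquot}, I would construct an explicit surjection
\[
\Psi\colon B_n[2\ell]\twoheadrightarrow (\mathbb{Z}/2\mathbb{Z})^{\binom{n}{2}}
\]
with kernel $B_n[4\ell]$. For $\beta\in B_n[2\ell]$ one has $\rho(\beta)\equiv I\pmod{2\ell}$, so we may write $\rho(\beta)=I+2\ell A_\beta$ for a unique integer matrix $A_\beta$, and define $\Psi(\beta):=A_\beta\bmod 2$. By Lemma~\ref{SympConj}, the matrix $\rho(\beta)$ preserves a symplectic form $J$; substituting $\rho(\beta)=I+2\ell A_\beta$ into $\rho(\beta)^T J\rho(\beta)=J$ and reducing modulo $2$ forces $A_\beta^T J+J A_\beta\equiv 0\pmod 2$, so $\Psi(\beta)$ lies in the infinitesimal symplectic subspace mod $2$, which is naturally identified with $(\mathbb{Z}/2\mathbb{Z})^{\binom{n}{2}}$.

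From the expansion $(I+2\ell A)(I+2\ell B)=I+2\ell(A+B)+4\ell^2 AB$ and the fact that $4\ell^2\equiv 0\pmod{4\ell}$, we obtain $A_{\beta\gamma}\equiv A_\beta+A_\gamma\pmod 2$, so $\Psi$ is a homomorphism; moreover $\Psi(\beta)=0$ exactly when $\rho(\beta)\equiv I\pmod{4\ell}$, i.e., $\beta\in B_n[4\ell]$. For surjectivity, I would pull back the Brendle--Margalit generators of $B_n[2]/B_n[4]$. Each standard pure-braid generator $A_{ij}=\sigma_{j-1}\cdots\sigma_{i+1}\sigma_i^2\sigma_{i+1}^{-1}\cdots\sigma_{j-1}^{-1}$ is a conjugate $g_{ij}\sigma_i^2 g_{ij}^{-1}$, and $\rho(\sigma_i)$ is a transvection $I+v_iw_i^T$ with $w_i^T v_i=0$. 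The nilpotency $(v_iw_i^T)^2=0$ yields $\rho(\sigma_i)^k=I+k\,v_iw_i^T$ for every integer $k$, so
\[
\rho(A_{ij}^\ell)=\rho(g_{ij})\rho(\sigma_i)^{2\ell}\rho(g_{ij})^{-1}=I+2\ell\,\rho(g_{ij})v_iw_i^T\rho(g_{ij})^{-1}.
\]
Hence $A_{ij}^\ell\in B_n[2\ell]$, and $\Psi(A_{ij}^\ell)$ coincides (mod $2$) with the Brendle--Margalit class of $A_{ij}$ in $(\mathbb{Z}/2\mathbb{Z})^{\binom{n}{2}}$; since those classes span the target by \cite{BM}, so do their $\ell$-th-power lifts, proving surjectivity. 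Note that $\ell$ odd is needed so that $2\ell\not\equiv 0\pmod{4\ell}$ and so that $\ell$ is invertible modulo $2$.

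The main obstacle is matching the codomain: the a priori target of $\Psi$ (all integer matrices mod $2$) is much larger than $(\mathbb{Z}/2\mathbb{Z})^{\binom{n}{2}}$, and cutting it down to exactly the right subspace requires the symplectic bookkeeping of Lemma~\ref{SympConj}. This plays the same role here that Newman's integer-matrix results \cite{N} play in identifying the codomain $S_n$ of the surjection in Theorem~\ref{symquot}, and verifying that the Brendle--Margalit spanning set actually fills out the infinitesimal symplectic subspace (rather than merely a subgroup of it) is the key point that ensures $\Psi$ is onto the full $(\mathbb{Z}/2\mathbb{Z})^{\binom{n}{2}}$.
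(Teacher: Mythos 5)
Your overall plan is sound, and the homomorphism and kernel parts of your construction are correct and in one respect cleaner than the paper's argument: writing $\rho(\beta)=I+2\ell A_\beta$ and setting $\Psi(\beta)=A_\beta\bmod 2$ gives $\ker\Psi=B_n[4\ell]$ essentially by inspection, whereas the paper must invoke Newman's intersection result (via Lemma~\ref{LemSplit}) to identify the kernel as $B_n[4]\cap B_n[2\ell]=B_n[4\ell]$. Your computation of $\Psi(A_{ij}^\ell)$ via the transvection identity $\rho(\sigma_i)^k=I+k\,v_iw_i^T$ is also correct and is the matrix-level shadow of the paper's Corollary~\ref{PurePower} together with $\phi(A_{ij}^\ell)=\phi(A_{ij})$.

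The genuine gap is exactly the step you flag as the ``main obstacle'': identifying the codomain. Your route through the infinitesimal symplectic condition $A^TJ+JA\equiv 0\pmod 2$ is not carried out, and it is problematic as stated. For $n$ even, Lemma~\ref{SympConj} identifies the image of $\rho_{-1}$ with a stabilizer subgroup of $\mathrm{Sp}(2g+2,\mathbb{Z})$ acting on a $(2g+2)$-dimensional space, while $\rho_{-1}$ itself acts on $\mathbb{Z}^{n-1}=\mathbb{Z}^{2g+1}$; any invariant alternating form on $\mathbb{Z}^{n-1}$ is then degenerate, so ``the infinitesimal symplectic subspace'' and its dimension require a separate (nontrivial, basis-dependent) computation that you have not supplied. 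A sharper warning sign: nothing in your symplectic argument actually uses that $\ell$ is odd (the two reasons you give are vacuous or unused), so if the argument were complete it would equally compute $B_n[4]/B_n[8]$ --- which the paper explicitly lists as open. The correct place where oddness enters, and the clean fix, is this: for $\ell$ odd, $\tfrac{1}{2}(\rho(\beta)-I)=\ell A_\beta\equiv A_\beta\pmod 2$, so your $\Psi$ is precisely the restriction to $B_n[2\ell]$ of the Brendle--Margalit map on $B_n[2]$, whose image is $(\mathbb{Z}/2\mathbb{Z})^{\binom{n}{2}}$ and whose kernel is $B_n[4]$ by the main theorem of \cite{BM}; hence $\mathrm{im}\,\Psi$ is contained in $(\mathbb{Z}/2\mathbb{Z})^{\binom{n}{2}}$ and contains the spanning classes $\Psi(A_{ij}^\ell)$, giving surjectivity with no symplectic linear algebra at all. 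With that replacement your proof becomes a matrix-level reformulation of the paper's, which instead restricts the mod-$2$ abelianization $\phi$ of $PB_n$ to $B_n[2\ell]$ and cites \cite{BM} for $\ker\phi=B_n[4]$.
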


In exploring the cohomology of the level $4$ subgroups, Kordek and Margalit, \cite{KM}, proved the following
\begin{theorem}[Kordek and Margalit, 2019]
For each $n$,
\[B_n/B_n[4]\cong \mathcal{Z}_n,\]
where $\mathcal{Z}_n$ is a non-split extension of $S_n$ by $(\mathbb{Z}/2\mathbb{Z})^{\binom{n}{2}}$.
\end{theorem}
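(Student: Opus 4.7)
The plan is to bootstrap off of the two preceding theorems (Arnol`d and Brendle--Margalit) to identify $B_n/B_n[4]$ as an extension, and then to show non-splitting by a direct obstruction at a transposition. Combining the quotients
\[ B_n/B_n[2] \cong S_n, \qquad B_n[2]/B_n[4] \cong (\mathbb{Z}/2\mathbb{Z})^{\binom{n}{2}},\]
and applying the third isomorphism theorem to $B_n[4] \trianglelefteq B_n[2] \trianglelefteq B_n$, I immediately obtain the short exact sequence
\[ 1 \longrightarrow (\mathbb{Z}/2\mathbb{Z})^{\binom{n}{2}} \longrightarrow B_n/B_n[4] \longrightarrow S_n \longrightarrow 1,\]
which establishes the extension structure. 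What remains is to prove this extension is non-split.

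To set up the obstruction, I would first make the $S_n$-action on the kernel completely explicit. In the Brendle--Margalit identification, a natural basis of $(\mathbb{Z}/2\mathbb{Z})^{\binom{n}{2}}$ is indexed by unordered pairs $\{i,j\} \subset \{1,\ldots,n\}$, with the basis vector $e_{\{i,j\}}$ represented by the square of a half-twist on strands $i$ and $j$. Two key facts I would verify (or extract from \cite{BM}): (i) the conjugation action of $B_n/B_n[2] = S_n$ on the kernel permutes this basis via the natural action on $2$-element subsets, and (ii) under $B_n[2] \to B_n[2]/B_n[4]$, the element $\sigma_i^2$ maps to the basis vector $e_{\{i,i+1\}}$.

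With those tools in hand, suppose for contradiction that a set-theoretic section $s : S_n \to B_n/B_n[4]$ exists that is a homomorphism. Let $\tau = (i, i+1)$ and let $\widetilde{\tau} = s(\tau)$. Because $\widetilde{\tau}$ lifts $\tau$ and $\sigma_i$ also lifts $\tau$, we may write $\widetilde{\tau} = \sigma_i \cdot z$ for some $z \in B_n[2]/B_n[4]$. Then
\[ \widetilde{\tau}^{\,2} = \sigma_i z \sigma_i z = \sigma_i^2 \cdot (\sigma_i^{-1} z \sigma_i) \cdot z = \sigma_i^2 \cdot z^{\tau} \cdot z,\]
where $z^{\tau}$ denotes the image of $z$ under the permutation action of $\tau$. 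Looking at the $e_{\{i,i+1\}}$-component: since $\tau$ fixes the pair $\{i,i+1\}$, the components of $z^{\tau}$ and $z$ at $e_{\{i,i+1\}}$ are equal and their sum is $0 \bmod 2$, while $\sigma_i^2$ contributes $1$. Hence $\widetilde{\tau}^{\,2}$ has nontrivial $e_{\{i,i+1\}}$-component and in particular $\widetilde{\tau}^{\,2} \neq 1$, contradicting the fact that a section must send the involution $\tau$ to an element of order dividing $2$.

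The main obstacle is the verification of the two facts above about the basis of the kernel---namely, that the square-half-twist basis is permuted by $S_n$ in the natural way and that $\sigma_i^2$ corresponds precisely to $e_{\{i,i+1\}}$. Both are essentially bookkeeping once one adopts the Brendle--Margalit description, but writing them out carefully (e.g.\ by a direct computation with the integral Burau representation on the pair $\sigma_i^2$, and by the $B_n$-equivariance of the identification $B_n[2]/B_n[4] \cong (\mathbb{Z}/2\mathbb{Z})^{\binom{n}{2}}$) is where the technical work lies. Once those are in place, the cocycle computation above is a three-line check that rules out any section.
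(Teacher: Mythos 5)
Your argument is correct, but note that this theorem is not proved in the paper at all: it is quoted as a result of Kordek and Margalit \cite{KM}, so there is no in-paper proof to compare against. Your derivation of the extension $1 \to (\mathbb{Z}/2\mathbb{Z})^{\binom{n}{2}} \to B_n/B_n[4] \to S_n \to 1$ from the Arnol'd and Brendle--Margalit theorems via the third isomorphism theorem is exactly right, and your non-splitting obstruction is the standard one (and essentially the one in \cite{KM}). The two facts you flag as needing verification are both routine: fact (ii) is immediate from the definitions, since $A_{i,i+1}=\sigma_i^2$ (the conjugating word in the formula for $A_{i,j}$ is empty when $j=i+1$) and $\sigma_i^2\notin B_n[4]$ because Brendle--Margalit identify $B_n[4]$ with $\ker\phi$ while $\phi(A_{i,i+1})=e_{\{i,i+1\}}\neq 0$; fact (i) holds because the kernel is abelian, so the conjugation action of $B_n$ on $B_n[2]/B_n[4]$ factors through $S_n$, and by the change-of-coordinates principle $\beta A_{i,j}\beta^{-1}$ is $PB_n$-conjugate to $A_{\pi(i),\pi(j)}$ (where $\pi$ is the underlying permutation of $\beta$), hence equal to it in $H_1(PB_n;\mathbb{Z}/2\mathbb{Z})$. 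With those in place your cocycle computation $(\sigma_i z)^2=\sigma_i^2\, z^{\tau} z$, whose $e_{\{i,i+1\}}$-coordinate is $1$ for every $z$, shows that no lift of the transposition $(i,i+1)$ has order $2$; indeed it shows the image of $\sigma_i$ in $B_n/B_n[4]$ has order $4$, which rules out a section on any subgroup of $S_n$ containing a transposition. The proof is complete.
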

We see that taken with this result our Theorems \ref{symquot} and \ref{abquot} imply the following generalization:
\begin{theorem}[Theorem \ref{fivelem}]
For each $n$ and each odd $\ell$,
\[B_n[\ell]/B_n[4\ell]\cong \mathcal{Z}_n.\]

\end{theorem}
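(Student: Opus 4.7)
The plan is to apply the Five Lemma to a commutative ladder comparing the level-$\ell$ filtration with the level-$1$ filtration. First, I would write down the two third-isomorphism-theorem sequences associated to the nested chains $B_n[4\ell]\subseteq B_n[2\ell]\subseteq B_n[\ell]$ and $B_n[4]\subseteq B_n[2]\subseteq B_n$:
\begin{equation*}
1\to B_n[2\ell]/B_n[4\ell]\to B_n[\ell]/B_n[4\ell]\to B_n[\ell]/B_n[2\ell]\to 1,
\end{equation*}
\begin{equation*}
1\to B_n[2]/B_n[4]\to B_n/B_n[4]\to B_n/B_n[2]\to 1.
\end{equation*}
The inclusion $B_n[\ell]\hookrightarrow B_n$ carries $B_n[k\ell]$ into $B_n[k]$ for each $k$, so it induces vertical maps from the top row to the bottom row, and these maps assemble into a commutative ladder of short exact sequences.

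Next, I would show the two outer vertical maps are isomorphisms. By Arnol'd's theorem and Theorem \ref{symquot} the right-hand groups are both $S_n$; by Brendle--Margalit and Theorem \ref{abquot} the left-hand groups are both $(\mathbb{Z}/2\mathbb{Z})^{\binom{n}{2}}$. The key elementary input is the intersection identity $B_n[a]\cap B_n[b]=B_n[\operatorname{lcm}(a,b)]$, which follows from the definition of $B_n[k]$ via the integral Burau representation together with the corresponding statement for principal congruence subgroups of integral matrices. Since $\ell$ is odd, $\operatorname{lcm}(\ell,2)=2\ell$ and $\operatorname{lcm}(2\ell,4)=4\ell$, so the kernels of the two outer vertical maps are $(B_n[\ell]\cap B_n[2])/B_n[2\ell]=1$ and $(B_n[2\ell]\cap B_n[4])/B_n[4\ell]=1$ respectively. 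Between finite groups of equal cardinality, injectivity upgrades to an isomorphism.

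Finally, the (short) Five Lemma applied to this ladder forces the middle vertical map $B_n[\ell]/B_n[4\ell]\to B_n/B_n[4]$ to be an isomorphism as well; combined with Kordek--Margalit's identification of the target as $\mathcal{Z}_n$, this yields the theorem. The only step requiring genuine verification is the intersection identity above, which is a routine consequence of passing through the Burau representation. Everything else is formal diagram-chasing, and no serious obstacle is anticipated.
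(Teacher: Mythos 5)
Your proof is correct, and it rests on the same skeleton as the paper's argument: the same two third-isomorphism-theorem short exact sequences and a concluding application of the five lemma. The genuine difference is in the vertical maps of the ladder. The paper takes them to be ``$\ell$-th power maps'' going from the level-$1$ row to the level-$\ell$ row, i.e.\ the inverses of the restriction maps $\tau_\ell$ and $\phi_\ell$ built in Theorems \ref{symquot} and \ref{abquot}; making this precise on the nonabelian middle term $B_n/B_n[4]$ takes some care, since an $\ell$-th power recipe is not obviously a homomorphism there. You instead use the maps induced by the inclusion $B_n[\ell]\hookrightarrow B_n$ (which carries $B_n[k\ell]$ into $B_n[k]$), so all three vertical arrows are automatically homomorphisms and the ladder automatically commutes. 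You then get the two outer isomorphisms by combining trivial kernels --- via the intersection identity $B_n[a]\cap B_n[b]=B_n[\operatorname{lcm}(a,b)]$, which is exactly the paper's Lemma \ref{LemSplit} coming from Newman's theorem through the symplectic picture --- with a counting argument: both right-hand groups have order $n!$ by Arnol'd and Theorem \ref{symquot}, both left-hand groups have order $2^{\binom{n}{2}}$ by Brendle--Margalit and Theorem \ref{abquot}, and an injection between finite groups of equal order is an isomorphism. What your route buys is a middle map that needs no justification and a fully rigorous diagram; what it costs is the explicit, generator-level description of the isomorphisms that the paper maintains throughout. Both arguments are valid, and yours is arguably the easier one to verify line by line.
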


A natural direction for future work would be to look into levels which are multiples of higher powers of $2$.  The first step in this direction would be understanding either $B_n/B_n[8]$ or $B_n[4]/B_n[8]$.

\section*{Acknowledgments}  The authors would like to thank Dan Margalit for his helpful conversations and guidance. The first, third and and fourth authors were supported by the $2019$ Georgia Tech REU (funded in part by NSF Grant DMS-17455).  The second author is supported in part by NSF Grant DMS-17455.

\section{Preliminaries}
We begin by recalling the definition of congruence subgroups of braid groups.  Our description of the integral Burau representation will be given explicitly on generators due to the nature of the arguments we will use below. Although this choice may seem to hide some of the rich topological content of this representation, we will describe how that can be recovered in Lemma \ref{SympConj}. 
\begin{definition}
The reduced integral Burau representation is a representation of the $n-$strand braid group, $\rho_{-1}:B_n\rightarrow GL(n-1,\mathbb{Z})$ defined on generators as
\begin{align*}
    & \rho_{-1}(\sigma_1)=\left(\begin{array}{cc}
    1 & 0 \\
    1 & 1
\end{array}\right)\oplus\text{Id}_{n-3}
& \rho_{-1}(\sigma_{n-1})=\text{Id}_{n-3}\oplus \left(\begin{array}{cc}
    1 & -1 \\
    0 & 1
\end{array}\right)
\end{align*}
and for $1<i<n-1$
\[ \rho_{-1}(\sigma_i)=\text{Id}_{i-2}\oplus\left(\begin{array}{ccc}
    1 & -1 & 0 \\
    0 & 1 & 0 \\
    0 & 1 & 1\\
\end{array}\right)\oplus\text{Id}_{n-i-2}.\]

Now recall for integral matrices it is possible to define a natural reduction $\bmod$ $\ell$ map component wise, namely
\[ r_\ell:GL(n-1,\mathbb{Z})\rightarrow GL(n-1,\mathbb{Z}/\ell\mathbb{Z}).\]

Then noting that each generator was sent to a integral matrix the level $\ell$ congruence subgroup of the braid group is the kernel of the composition of these maps, meaning
\[B_n[\ell]:=ker(r_\ell\circ \rho_{-1}).\]
\end{definition}

\begin{remark}
This definition is equivalent to the definition found in \cite{BM}, which uses the unreduced Burau reprsentation.  The equivalence can be seen as the unreduced Burau representation factors into a direct sum of the reduced Burau representation and a $1-$dimensional representation.  
\end{remark}
We now provide a collection of lemmas.
\begin{lemma}[Proposition $2.1$ \cite{GG}]
\label{SympConj}
The reduced integral Burau representation is conjugate to the symplectic representation 
\[\rho:B_{2g+b}\rightarrow \begin{cases}
\mathrm{Sp}(2g,\mathbb{Z}) & b=1\\
(\mathrm{Sp}(2g+2,\mathbb{Z}))_{y_{g+1}} & b=2
\end{cases}\]
coming from the standard action on homology of the surface which double covers the punctured disk, where $(\mathrm{Sp}(2g+2,\mathbb{Z}))_{y_{g+1}}$ is the subgroup of $\mathrm{Sp}(2g+2,\mathbb{Z})$ which fixes the homology class associated one of the two boundary components.

\end{lemma}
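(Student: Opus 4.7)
The plan is to realize the representation in the lemma geometrically and then match it against the explicit formulas given for $\rho_{-1}$. First, I would construct the branched double cover $\widetilde{D}_n$ of the $n$-punctured disk $D_n$ with branching locus equal to the $n$ punctures. A straightforward Euler characteristic (or Riemann--Hurwitz) calculation shows that $\widetilde{D}_n$ is a surface of genus $g$ with one boundary component when $n = 2g+1$, and a surface of genus $g$ with two boundary components when $n = 2g+2$. In either case $H_1(\widetilde{D}_n;\mathbb{Z})$ is free abelian of a rank ($2g$ or $2g+1$) which is consistent with the $(n-1)$-dimensional target of $\rho_{-1}$.

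Next I would fix a convenient basis. For $i = 1,\ldots,n-1$ let $\alpha_i \subset D_n$ be an embedded arc joining the $i$-th and $(i+1)$-th punctures, chosen so that the $\alpha_i$ are pairwise disjoint in their interiors. Each $\alpha_i$ lifts to an embedded simple closed curve $\widetilde{\alpha}_i$ in $\widetilde{D}_n$, since the local monodromies at the two endpoints cancel. A Mayer--Vietoris computation (or a direct check via a handle decomposition of $\widetilde{D}_n$ adapted to the arcs) then shows that the classes $[\widetilde{\alpha}_1],\ldots,[\widetilde{\alpha}_{n-1}]$ form a basis of $H_1(\widetilde{D}_n;\mathbb{Z})$ when $n = 2g+1$, and generate together with one boundary class when $n = 2g+2$. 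The algebraic intersection pairing satisfies $\widetilde{\alpha}_i \cdot \widetilde{\alpha}_j = \pm 1$ when $|i-j|=1$ and vanishes otherwise, matching the standard symplectic form coming from the $A_{n-1}$ Dynkin pairing.

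Third, I would lift each half-twist $\sigma_i$ to a diffeomorphism of $\widetilde{D}_n$ (possible because $\sigma_i$ preserves the covering data of $\widetilde{D}_n \to D_n$) and compute its induced action on the chosen basis. On a curve disjoint from $\alpha_i$ the action is trivial, so only the effect on $\widetilde{\alpha}_{i-1}$, $\widetilde{\alpha}_i$, $\widetilde{\alpha}_{i+1}$ needs to be analyzed. With appropriate choices of orientation and of the lift, the induced map is precisely the Dehn twist along $\widetilde{\alpha}_i$, sending $\widetilde{\alpha}_{i\pm 1} \mapsto \widetilde{\alpha}_{i\pm 1} \pm \widetilde{\alpha}_i$ and fixing $\widetilde{\alpha}_i$. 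Writing these transvections as matrices and comparing with the block forms given for $\rho_{-1}(\sigma_i)$ shows the two representations agree after conjugation by a single change-of-basis matrix.

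Finally, for the $n = 2g+2$ case I would adjoin a dual generator to the boundary class in order to embed $H_1(\widetilde{D}_n)$ into a symplectic lattice of rank $2g+2$; because every braid preserves the boundary of $D_n$, the induced action fixes the boundary class $y_{g+1}$, giving the stated target $(\mathrm{Sp}(2g+2,\mathbb{Z}))_{y_{g+1}}$. I expect the principal obstacle to lie in the third step: the matrices defining $\rho_{-1}$ mix $+1$ and $-1$ entries, so matching them exactly forces one to coordinate choices of orientation on the $\widetilde{\alpha}_i$ with choices of lifts of the $\sigma_i$. This bookkeeping is routine but fiddly, and is the reason the lemma is stated as a conjugacy of representations rather than as an equality.
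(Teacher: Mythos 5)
The paper does not prove this lemma at all---it is quoted directly from Proposition~2.1 of Gambaudo--Ghys \cite{GG}---and your sketch is precisely the standard double-branched-cover argument that underlies that reference, so it is essentially the same approach. The one step worth tightening is the $b=2$ case: rather than vaguely ``adjoining a dual generator,'' glue an annulus joining the two boundary components of $\widetilde{D}_n$ to obtain a closed genus $g+1$ surface; since every lifted braid is the identity near $\partial\widetilde{D}_n$ it extends by the identity over the annulus, which makes the action on the rank-$(2g+2)$ symplectic lattice well defined and visibly fixes the class $y_{g+1}$.
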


\begin{lemma}
\label{power}
For each $m$ and for each $1\leq i\leq n$, we have $\sigma_i^m\in B_n[m]$.
\end{lemma}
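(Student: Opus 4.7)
The plan is to verify the claim by direct matrix computation, checking that $\rho_{-1}(\sigma_i)^m$ reduces to the identity modulo $m$ for each generator. Since the representation is defined explicitly on generators as a block sum of an identity matrix with a small unipotent block (of size $2\times 2$ for $i=1$ and $i=n-1$, and $3\times 3$ for $1<i<n-1$), it suffices to analyze the $m$-th power of each of these three blocks.

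In each case I would write the nontrivial block in the form $I + N$ where $N$ is the strictly off-diagonal part. For $i=1$, $N = \left(\begin{smallmatrix} 0 & 0 \\ 1 & 0 \end{smallmatrix}\right)$, and for $i=n-1$, $N = \left(\begin{smallmatrix} 0 & -1 \\ 0 & 0 \end{smallmatrix}\right)$; in both of these cases $N^2 = 0$ is immediate. For the interior generators, $N = \left(\begin{smallmatrix} 0 & -1 & 0 \\ 0 & 0 & 0 \\ 0 & 1 & 0 \end{smallmatrix}\right)$, and a quick multiplication shows $N^2=0$ here as well. Once nilpotency of order $2$ is established, the binomial expansion collapses:
\begin{equation*}
(I+N)^m = I + mN,
\end{equation*}
so reducing mod $m$ yields the identity in each block.

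Putting the blocks back together, $\rho_{-1}(\sigma_i)^m = \rho_{-1}(\sigma_i^m)$ differs from the identity in $GL(n-1,\mathbb{Z})$ only in entries divisible by $m$. Applying the reduction $r_m$ then gives $r_m \circ \rho_{-1}(\sigma_i^m) = \mathrm{Id}$, which is exactly the condition $\sigma_i^m \in B_n[m]$.

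There is no real obstacle here; the only thing to be careful about is bookkeeping the three cases ($i=1$, $i=n-1$, and $1<i<n-1$) separately, since the representation has a slightly different form at the endpoints. The nilpotency of the off-identity part in each case is the single fact that makes the computation trivial.
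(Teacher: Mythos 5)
Your proof is correct and follows the first of the two routes the paper itself indicates (a direct matrix computation from the explicit generator matrices); your observation that each nontrivial block is $I+N$ with $N^2=0$, so that $(I+N)^m=I+mN\equiv I \pmod{m}$, is precisely the computation the paper leaves to the reader. There are no gaps, and the three-case bookkeeping ($i=1$, $i=n-1$, interior $i$) is handled correctly.
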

\begin{proof}
This follows either from a direct computation of matrix multiplication using the above definition, or as a direct application  of Lemma \ref{SympConj}.  For a discussion of the latter approach see relation ``$R3$: Powers of Dehn Twists" in \cite{Sty1}.
\end{proof}

\begin{corollary}
\label{PurePower}
For each $m$ and for each $1\leq i<j\leq n$ we have $A_{i,j}^m\in B_n[2m]$, where
\[A_{i,j}=\sigma_{j-1}\sigma_{j-2}...\sigma_{i+2}\sigma_{i+1}\sigma_{i}^2\sigma_{i+1}^{-1}\sigma_{i+2}^{-1}...\sigma_{j-2}^{-1}\sigma_{j-1}^{-1}\] are generators of $PB_n$.  
\end{corollary}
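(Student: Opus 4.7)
The plan is to exploit the fact that $A_{i,j}$ is defined as a conjugate of $\sigma_i^2$, so raising it to a power $m$ simply moves $m$ into the middle. Specifically, set $w = \sigma_{j-1}\sigma_{j-2}\cdots\sigma_{i+1}\in B_n$, so that by the stated definition
\[ A_{i,j} = w\,\sigma_i^2\,w^{-1}. \]
Then the standard telescoping identity for conjugation gives
\[ A_{i,j}^m = \bigl(w\,\sigma_i^2\,w^{-1}\bigr)^m = w\,\sigma_i^{2m}\,w^{-1}. \]

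Next I would invoke Lemma \ref{power} with the exponent $2m$ (and the index $i$) to conclude that $\sigma_i^{2m} \in B_n[2m]$.

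Finally, I would observe that $B_n[2m]$ is defined as the kernel of the group homomorphism $r_{2m}\circ\rho_{-1}:B_n \to GL(n-1,\BZ/2m\BZ)$ and is therefore a \emph{normal} subgroup of $B_n$. Consequently, $B_n[2m]$ is closed under conjugation by arbitrary elements of $B_n$, so
\[ A_{i,j}^m = w\,\sigma_i^{2m}\,w^{-1} \in B_n[2m], \]
which is exactly the claim. There is no real obstacle here: the corollary is essentially a one-line consequence of Lemma \ref{power} together with the normality of $B_n[2m]$, and the only thing to notice is the bookkeeping that the conjugating word $w$ drops out when computing $A_{i,j}^m$.
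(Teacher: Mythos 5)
Your proof is correct and follows essentially the same route as the paper: write $A_{i,j}^m$ as a conjugate of $\sigma_i^{2m}$, apply Lemma \ref{power}, and use that $B_n[2m]$ is the kernel of a homomorphism (hence normal, hence closed under conjugation). The paper phrases the last step as ``$r_{2m}\circ\rho_{-1}$ is a group homomorphism,'' but that is exactly the normality argument you spell out.
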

\begin{proof}
For any $i$ and $j$, we have that $A_{i,j}$ is a conjugate of $\sigma_i^2$. By Lemma \ref{power}, $\sigma_i^{2m}\in B_n[2m]$. Then as $r_{2m}\circ\rho_{-1}$ is a group homomorphism, the result follows.
\end{proof}

Made relevant in this context by Lemma \ref{SympConj}, we have the following result on integral symplectic matrices due to Newman.
\begin{lemma}[Theorem $VII.22$ \cite{N}]
\label{LemInt}
Let $D=\frac{ab}{GCD(a,b)}$, then 
\[\mathrm{Sp}(2n,\mathbb{Z})[D]=\mathrm{Sp}(2n,\mathbb{Z})[a]\cap \mathrm{Sp}(2n,\mathbb{Z})[b]\]
\end{lemma}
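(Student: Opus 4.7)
The plan is to reduce the claim to a familiar number-theoretic fact applied entry by entry to the matrix $M - I$. Recall that $\mathrm{Sp}(2n,\BZ)[k]$ consists of those $M \in \mathrm{Sp}(2n,\BZ)$ for which every entry of $M - I$ is divisible by $k$. So the statement is really about when an integer is simultaneously divisible by $a$ and $b$.

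First, I would show the containment $\mathrm{Sp}(2n,\BZ)[D] \subseteq \mathrm{Sp}(2n,\BZ)[a] \cap \mathrm{Sp}(2n,\BZ)[b]$. This is immediate: since $D = \mathrm{lcm}(a,b)$, both $a$ and $b$ divide $D$, so any entry of $M - I$ divisible by $D$ is divisible by $a$ and by $b$. Hence any $M$ congruent to the identity modulo $D$ is congruent to the identity modulo $a$ and modulo $b$.

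For the reverse containment, I would take $M \in \mathrm{Sp}(2n,\BZ)[a] \cap \mathrm{Sp}(2n,\BZ)[b]$ and apply the standard divisibility fact that an integer $x$ satisfies $a \mid x$ and $b \mid x$ if and only if $\mathrm{lcm}(a,b) \mid x$, equivalently $\frac{ab}{\gcd(a,b)} \mid x$. Applying this entrywise to $M - I$ shows every entry is divisible by $D$, so $M \in \mathrm{Sp}(2n,\BZ)[D]$. The symplectic condition plays no role in either direction beyond ensuring we stay in $\mathrm{Sp}(2n,\BZ)$, which is automatic since we start with symplectic matrices.

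Because the content is genuinely this elementary, there is no real obstacle: the only ``hard'' input is the lcm characterization of common divisibility, which is standard. The virtue of isolating this as a lemma is not the depth of its proof but the clean way it lets one combine information from different congruence levels, which is exactly how it will be used in the proofs of Theorems~\ref{symquot}, \ref{abquot}, and \ref{fivelem} when one needs to upgrade membership in $B_n[a]$ and $B_n[b]$ to membership in $B_n[\mathrm{lcm}(a,b)]$ via Lemma~\ref{SympConj}.
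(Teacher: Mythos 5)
Your proof is correct and complete. Note that the paper itself gives no proof of this lemma at all --- it is quoted as Theorem VII.22 of Newman's \emph{Integral Matrices} --- so you have supplied an argument where the authors supply only a citation. Your argument is the right one: since $\mathrm{Sp}(2n,\mathbb{Z})[k]$ is by definition the set of symplectic $M$ with $M\equiv I \pmod{k}$ entrywise, the containment $\mathrm{Sp}(2n,\mathbb{Z})[D]\subseteq\mathrm{Sp}(2n,\mathbb{Z})[a]\cap \mathrm{Sp}(2n,\mathbb{Z})[b]$ follows from $a\mid D$ and $b\mid D$, and the reverse containment is the entrywise fact that an integer divisible by both $a$ and $b$ is divisible by $\mathrm{lcm}(a,b)=ab/\gcd(a,b)$. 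Two further observations. First, as you point out, the symplectic condition plays no role: the identical argument proves $G[D]=G[a]\cap G[b]$ for the principal congruence subgroups of any subgroup $G\leq GL(m,\mathbb{Z})$. In particular, one could apply it directly to $\ker(r_\ell\circ\rho_{-1})$ inside $GL(n-1,\mathbb{Z})$ and obtain Lemma~\ref{LemSplit} with no appeal to Lemma~\ref{SympConj}; the paper's passage through the symplectic picture is a convenience for citing Newman rather than a logical necessity for this particular step. Second, the genuinely nontrivial results in this circle of ideas are the companion statements (such as the product formula $G[a]\,G[b]=G[\gcd(a,b)]$ and surjectivity of reduction maps); the intersection formula actually used here is, as you say, elementary, and your isolation of the $\mathrm{lcm}$ characterization of common divisibility as the only real input is accurate.
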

We now apply this to congruence subgroups of the braid group.
\begin{lemma}
\label{LemSplit}
Let $m=\prod_{i}p_i^{\alpha_i}$. Then 
\[\bigcap_i B_n[p_i^{\alpha_i}]= B_n[m],\]
and in particular, for $\ell$ odd we have 
\[B_n[2^{\alpha} \ell]=B_n[2^{\alpha}]\cap B_n[\ell].\]
\end{lemma}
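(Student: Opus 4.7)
The strategy is to transfer the intersection statement from the braid group to an analogous statement for integral matrix congruence subgroups, where Newman's result (Lemma \ref{LemInt}) supplies the answer. First, by definition one has $B_n[\ell] = \rho_{-1}^{-1}(\Gamma[\ell])$, where $\Gamma[\ell] := \ker r_\ell$ is the principal level-$\ell$ congruence subgroup of $GL(n-1,\mathbb{Z})$. Since taking preimages under a group homomorphism commutes with intersections, it is enough to prove
\[
\bigcap_i \Gamma[p_i^{\alpha_i}] \;=\; \Gamma[m],
\qquad m := \prod_i p_i^{\alpha_i}.
\]

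To bring Newman's lemma to bear, I would use Lemma \ref{SympConj}: after an integral change of basis, $\rho_{-1}(B_n)$ sits inside $\mathrm{Sp}(2g+b,\mathbb{Z})$ (or the stabilizer of a boundary class therein). Because conjugation by an integer matrix with integer inverse preserves the relation ``$\equiv I \pmod \ell$'' entrywise, the Burau congruence subgroups correspond exactly to symplectic ones, reducing the question to whether $\bigcap_i \mathrm{Sp}[p_i^{\alpha_i}] = \mathrm{Sp}[m]$.

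I would then induct on the number of distinct prime factors of $m$. The base case follows directly from Lemma \ref{LemInt}: for coprime $a$ and $b$ one has $ab/\gcd(a,b) = ab$, so $\mathrm{Sp}[ab] = \mathrm{Sp}[a] \cap \mathrm{Sp}[b]$. Applied with $a = 2^\alpha$ and $b = \ell$, which are coprime because $\ell$ is odd, this already yields the ``in particular'' statement. For the inductive step, peel off one factor $p_k^{\alpha_k}$, which is coprime to $m' := m/p_k^{\alpha_k}$; a single application of Newman gives $\mathrm{Sp}[m] = \mathrm{Sp}[p_k^{\alpha_k}] \cap \mathrm{Sp}[m']$, and the inductive hypothesis on $m'$ closes the argument. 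Finally, pulling back through $\rho_{-1}$ translates the matrix-level conclusion back into $B_n$.

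The only genuinely delicate point is confirming that the change of basis of Lemma \ref{SympConj} is defined over $\mathbb{Z}$ with integral inverse, so that reduction mod $\ell$ commutes cleanly with the identification of the Burau image with the symplectic (or boundary-stabilizer) image; this is implicit in the setup of \cite{GG}. Once this compatibility is in hand, the proof is a routine packaging of Newman's lemma together with the fact that preimages of intersections are intersections of preimages.
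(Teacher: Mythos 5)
Your proposal is correct and follows essentially the same route as the paper: the easy containment is immediate from the definition, and the reverse containment is obtained by passing through Lemma \ref{SympConj} to the symplectic setting and applying Newman's Lemma \ref{LemInt} repeatedly (your induction on the number of distinct prime factors is just a careful formalization of that repetition). Your added observations — that $B_n[\ell]=\rho_{-1}^{-1}(\ker r_\ell)$ so preimages commute with intersections, and that the conjugating matrix must be integral with integral inverse — are exactly the points the paper leaves implicit.
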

\begin{proof}
The following is a generalization of Lemma $6.2$ in \cite{Sty1}.  As any multiple of $m$ is also a multiple of each $p_i^{\alpha_i}$ we have immediately from the definition that $B_n[m]\leq \bigcap_i B_n[p_i^{\alpha_i}]$.  The reverse inclusion,
$\bigcap_i B_n[p_i^{\alpha_i}]\leq B_n[m]$,
follows from Lemma \ref{SympConj} and repeated applications of Lemma \ref{LemInt}.

\end{proof}

\section{Quotients of Congruence Subgroups}
We may now prove our generalizations as stated in the introduction.  
\begin{theorem}
Let $\ell$ be odd,  then 
\[B_n[\ell]/B_n[2\ell]\cong S_n.\]
\label{symquot}
\end{theorem}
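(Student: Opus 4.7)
The plan is to build a surjection $B_n[\ell] \twoheadrightarrow S_n$ whose kernel is exactly $B_n[2\ell]$, and then invoke the First Isomorphism Theorem. The map is the obvious one: restrict the standard permutation representation $\pi : B_n \to S_n$ (which sends $\sigma_i \mapsto (i, i{+}1)$ and has kernel $PB_n = B_n[2]$, by Arnol`d's theorem) to the subgroup $B_n[\ell]$.

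First I would identify the kernel. By construction, the kernel of $\pi|_{B_n[\ell]}$ is
\[
B_n[\ell] \cap \ker \pi \;=\; B_n[\ell] \cap B_n[2].
\]
Since $\ell$ is odd, $\gcd(\ell, 2) = 1$ and Lemma \ref{LemSplit} (applied with the prime factorization of $2\ell$) gives $B_n[\ell] \cap B_n[2] = B_n[2\ell]$. This identifies the kernel exactly and is the place where the Newman-type splitting result does the real work.

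Next I would establish surjectivity, which is the only nontrivial step and the main thing to check. It suffices to hit a generating set of $S_n$, and the adjacent transpositions $(i, i{+}1)$ for $1 \le i \le n-1$ will do. By Lemma \ref{power}, $\sigma_i^{\ell} \in B_n[\ell]$ for each $i$. Since $\pi(\sigma_i) = (i, i{+}1)$ is an involution and $\ell$ is odd,
\[
\pi(\sigma_i^{\ell}) \;=\; (i, i{+}1)^{\ell} \;=\; (i, i{+}1).
\]
So every adjacent transposition lies in the image, and $\pi|_{B_n[\ell]}$ is surjective. Combining with the kernel computation yields $B_n[\ell]/B_n[2\ell] \cong S_n$.

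The main obstacle I anticipate is entirely the kernel computation rather than surjectivity: surjectivity is essentially immediate from the parity of $\ell$, but pinning down the kernel as $B_n[2\ell]$ (rather than merely containing it) requires the intersection formula of Lemma \ref{LemSplit}, which is where the symplectic incarnation of the Burau representation (Lemma \ref{SympConj}) and Newman's theorem enter in an essential way. Without that splitting, one would only obtain an injection $B_n[\ell]/(B_n[\ell] \cap B_n[2]) \hookrightarrow S_n$ and would be left to argue by hand that the intersection coincides with $B_n[2\ell]$.
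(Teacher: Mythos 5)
Your proposal is correct and follows essentially the same route as the paper: restrict the permutation homomorphism $\sigma_i \mapsto (i,i{+}1)$ to $B_n[\ell]$, use Lemma \ref{power} and the parity of $\ell$ to get surjectivity via $\sigma_i^{\ell}$, and use Lemma \ref{LemSplit} to identify the kernel $B_n[\ell]\cap B_n[2]$ with $B_n[2\ell]$ before applying the first isomorphism theorem. No gaps.
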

\begin{proof}
The following is a variation of the proof of Theorem $6.1$ given in \cite{Sty1}. We utilize our more general Lemma \ref{LemSplit}, and the use of a generating set for $B_n[\ell]$ is avoided.  

Let $\tau:B_n\rightarrow S_n$ be the natural surjective homomorphism which sends each braid generator $\sigma_i$ to the transposition $(i,i+1)$.  We have that $\ker(\tau)=PB_n= B_n[2]$, as proved by Arnol`d \cite{A}.  Let $\tau_\ell$ be the restriction of $\tau$ to the subgroup $B_n[\ell],$ \begin{align*}\tau_\ell:=\tau|_{B_n[\ell]}:B_n[\ell]\rightarrow S_n.\end{align*}  

As $\ell$ is odd, we have that \begin{align*}\tau(\sigma_i^\ell)=(i,i+1)^{\ell}=(i,i+1),\end{align*} meaning $\tau_\ell$ is surjective utilizing Lemma \ref{power}. 

Finally we have \begin{align*}\ker(\tau_\ell)=\ker(\tau)\cap B_n[\ell]\cong B_n[2]\cap B_n[\ell]\cong B_n[2\ell],\end{align*} where the last isomorphism is a consequence of Lemma \ref{LemSplit}, and our result follows from the first isomorphism theorem.
\end{proof}

\begin{theorem}
\label{abquot}
Let $\ell$ be odd, then 
\[B_n[2\ell]/B_n[4\ell]\cong (\mathbb{Z}/2\mathbb{Z})^{\binom{n}{2}}.\]
\end{theorem}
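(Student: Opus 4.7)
The plan is to follow the template of Theorem \ref{symquot}, replacing the symmetric-group quotient $\tau : B_n \to S_n$ (with kernel $B_n[2]$) by the Brendle--Margalit quotient $\pi : B_n[2] \to (\mathbb{Z}/2\mathbb{Z})^{\binom{n}{2}}$ (with kernel $B_n[4]$), and restricting to the congruence subgroup $B_n[2\ell]$ in place of $B_n[\ell]$.

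The first step is to set up the restriction $\pi_\ell := \pi|_{B_n[2\ell]} : B_n[2\ell] \to (\mathbb{Z}/2\mathbb{Z})^{\binom{n}{2}}$, which makes sense because $B_n[2\ell] \leq B_n[2]$. Next I would show that $\pi_\ell$ is surjective. By the Brendle--Margalit theorem, the target is generated by the classes $\pi(A_{i,j})$ for $1 \leq i < j \leq n$, where $A_{i,j}$ are the standard pure-braid generators introduced in Corollary \ref{PurePower}. By that corollary, $A_{i,j}^{\ell} \in B_n[2\ell]$ for every $i<j$. Since $\ell$ is odd and the target group has exponent $2$, we have
\[
\pi_\ell(A_{i,j}^{\ell}) = \pi(A_{i,j})^{\ell} = \pi(A_{i,j}),
\]
so every generator of $(\mathbb{Z}/2\mathbb{Z})^{\binom{n}{2}}$ lies in the image of $\pi_\ell$, giving surjectivity.

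The third step is to identify the kernel. We have $\ker(\pi_\ell) = \ker(\pi) \cap B_n[2\ell] = B_n[4] \cap B_n[2\ell]$. Applying Lemma \ref{LemSplit} to the odd integer $\ell$ gives $B_n[2\ell] = B_n[2] \cap B_n[\ell]$, so, using $B_n[4] \leq B_n[2]$,
\[
B_n[4] \cap B_n[2\ell] = B_n[4] \cap B_n[2] \cap B_n[\ell] = B_n[4] \cap B_n[\ell] = B_n[4\ell],
\]
where the last equality is another application of Lemma \ref{LemSplit} (using $\gcd(4,\ell)=1$). The first isomorphism theorem then yields $B_n[2\ell]/B_n[4\ell] \cong (\mathbb{Z}/2\mathbb{Z})^{\binom{n}{2}}$.

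The main conceptual obstacle would be surjectivity if we did not already have control over an explicit generating set of the target; here the Brendle--Margalit result does that work for us, and the remainder is a direct application of Lemma \ref{LemSplit}. One subtlety worth verifying is that the map $\pi$ used in the Brendle--Margalit identification really sends each $A_{i,j}$ to a distinct generator of the target, rather than simply being an abstract isomorphism; this is stated explicitly in \cite{BM}, so it can be invoked as a black box and the rest of the argument goes through as sketched.
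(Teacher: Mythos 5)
Your proposal is correct and follows essentially the same route as the paper: the paper also takes the mod~$2$ abelianization of $PB_n$ (whose kernel is $B_n[4]$ by Brendle--Margalit), restricts it to $B_n[2\ell]$, uses Corollary \ref{PurePower} and oddness of $\ell$ to get surjectivity via the classes of $A_{i,j}^\ell$, and identifies the kernel as $B_n[4\ell]$ via Lemma \ref{LemSplit}. No gaps.
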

\begin{proof}
Let \begin{align*}\phi:PB_n\rightarrow H_1(PB_n;\mathbb{Z}/2\mathbb{Z})\cong (\mathbb{Z}/2\mathbb{Z})^{\binom{n}{2}}\end{align*} be the $\bmod$  $2$ abelianization map of the pure braid group.  An explicit description of $\phi$ is given by viewing $(\mathbb{Z}/2\mathbb{Z})^{\binom{n}{2}}$ as the free abelian group on the image of the $\binom{n}{2}$ generators of $PB_n$ which were denoted $A_{i,j}$ in Corollary \ref{PurePower}.  The kernel of $\phi$ is known to be $B_n[4]$ (the main theorem of \cite{BM}).  Now let $\phi_\ell$ be the restriction of $\phi$ to the subgroup $B_n[2\ell]\leq B_n[2]=PB_n$.  From Corollary \ref{PurePower}, we have that $A_{i,j}^\ell\in B_n[2\ell]$.  Now as $\ell$ is odd we have 
\begin{align*}\phi(A_{i,j}^\ell)=\phi(A_{i,j})^\ell=\phi(A_{i,j}),\end{align*} and so $\phi_\ell$ is a surjection.  Additionally utilizing Lemma \ref{LemSplit} we have 
\begin{align*}ker(\phi_\ell)=\ker(\phi)\cap B_n[2\ell]=B_n[4]\cap B_n[2]\cap B_n[\ell]=B_n[4]\cap B_n[\ell]=B_n[4\ell],\end{align*}
where we have also used that $B_n[4]\leq B_n[2]$ and so $B_n[4]\cap B_n[2]=B_n[4].$  Then our desired result follows from the first isomorphism theorem.
\end{proof}

\begin{theorem}
\label{fivelem}
Let $\ell$ be odd, then we have
\[B_n[\ell]/B_n[4\ell]\cong B_n/B_n[4].\]
\end{theorem}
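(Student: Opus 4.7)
The plan is to apply the Five Lemma, which also explains the label of the theorem. Specifically, I would set up the commutative ladder of short exact sequences
\[
\begin{array}{ccccccccc}
1 & \to & B_n[2\ell]/B_n[4\ell] & \to & B_n[\ell]/B_n[4\ell] & \to & B_n[\ell]/B_n[2\ell] & \to & 1 \\
  &     & \downarrow &   & \downarrow &   & \downarrow &   & \\
1 & \to & B_n[2]/B_n[4] & \to & B_n/B_n[4] & \to & B_n/B_n[2] & \to & 1,
\end{array}
\]
in which every vertical arrow is induced by the inclusion $B_n[\ell]\hookrightarrow B_n$. These arrows are well defined because $B_n[4\ell]\leq B_n[4]$ and $B_n[2\ell]\leq B_n[2]$, which is immediate from the definition of a congruence subgroup (a matrix that is the identity modulo $4\ell$ is certainly the identity modulo $4$, and similarly for $2\ell$).

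The first task is to confirm that both rows are exact. Each congruence subgroup is the kernel of a homomorphism out of $B_n$, so it is normal in $B_n$, hence normal in every intermediate subgroup. Combined with the containments $B_n[4\ell]\leq B_n[2\ell]\leq B_n[\ell]$ and $B_n[4]\leq B_n[2]\leq B_n$, the third isomorphism theorem then yields exactness of each row.

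The substantive step is to identify the outer vertical arrows as isomorphisms. For the right arrow, the bottom quotient is $S_n$ by Arnol\`d's theorem and the top quotient is $S_n$ by Theorem \ref{symquot}. Moreover, both identifications are realized by the permutation homomorphism $\tau$ (respectively its restriction $\tau_\ell$), so the induced right arrow is the identity of $S_n$ under these identifications. For the left arrow, the bottom quotient is $(\mathbb{Z}/2\mathbb{Z})^{\binom{n}{2}}$ by the main theorem of \cite{BM} and the top quotient is $(\mathbb{Z}/2\mathbb{Z})^{\binom{n}{2}}$ by Theorem \ref{abquot}. Once more, both identifications come from the $\bmod\,2$ abelianization $\phi$ of $PB_n$ and its restriction $\phi_\ell$, so the induced left arrow is again the identity.

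With both outer vertical arrows isomorphisms and both rows exact, the Five Lemma immediately delivers the desired isomorphism $B_n[\ell]/B_n[4\ell]\cong B_n/B_n[4]$ in the middle. I do not expect a significant obstacle here: all of the nontrivial content has been absorbed into Theorems \ref{symquot} and \ref{abquot}, which already pin down the outer vertical arrows up to the claimed identifications, and the present result is a clean bookkeeping consequence once the ladder is in place.
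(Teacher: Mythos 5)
Your proof is correct and follows essentially the same route as the paper: the same ladder of short exact sequences (exact by the third isomorphism theorem), outer vertical isomorphisms supplied by Arnol`d, Brendle--Margalit, and Theorems \ref{symquot} and \ref{abquot}, and then the five lemma. The only cosmetic difference is that your vertical arrows are induced by the inclusions $B_n[\ell]\hookrightarrow B_n$ rather than the paper's ``$\ell^{\text{th}}$ power maps'' (so they point in the opposite direction), which makes commutativity of the diagram immediate and is arguably the cleaner formulation.
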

\begin{proof}
The third isomorphism theorem gives us that 
\[(B_n[\ell]/B_n[4\ell])/(B_n[2\ell]/B_n[4\ell])\cong B_n[\ell]/B_n[2\ell],\]
meaning the two rows of the following commutative diagram are exact:
\[\begin{tikzcd}
1\arrow[r] & B_n[2]/B_n[4] \arrow[d]\arrow[r] & B_n/B_n[4]\arrow[r]\arrow[d] & B_n/B_n[2]\arrow[d]\arrow[r] & 1\\
1\arrow[r] & B_n[2\ell]/B_n[4\ell] \arrow[r]& B_n[\ell]/B_n[4\ell] \arrow[r]& B_n[\ell]/B_n[2\ell] \arrow[r]& 1\\
\end{tikzcd}\]
where the vertical arrows are $\ell^{\text{th}}$ power maps used to construct isomorphisms in Theorem \ref{symquot} and Theorem \ref{abquot}.  Then our result follows from an application of the five lemma.  

\end{proof}

\end{document}